\documentclass[a4paper,11pt]{article}
\usepackage{amsfonts}
\usepackage{amsmath}
\usepackage{amsbsy}
\usepackage{amsxtra}
\usepackage{latexsym}
\usepackage{amssymb}
\usepackage{url}
\usepackage{enumerate}
\usepackage{amscd}
\usepackage[active]{srcltx}
\usepackage{cite}
%%%%%%%%%%%%%%%%%%%%%%%%%%%%%%%%%%%%%%%%%%%%%%%%%%%%%%%
\oddsidemargin=0pt
\evensidemargin=0pt
\textwidth=6.5in

\headsep=1cm

%\usepackage[margin=2cm,nohead]{geometry}
%%%%%%%%%%%%%%%%%%%%%%%%%%%%%%%%%%%%%%%%%%%%%%%%%%%%%%%
\newtheorem{theorem}{Theorem}
\newtheorem{lemma}{Lemma}

\newtheorem{proposition}{Proposition}
\newtheorem{remark}{Remark}

\newtheorem{problem}{Problem}

\def\argmin{\operatorname{argmin}}

\DeclareMathOperator{\diag}{diag}
\DeclareMathOperator{\sgn}{sgn}

\newcommand{\lng}{\langle}
\newcommand{\rng}{\rangle}

\newcommand{\R}{\mathbb R}

\newcommand{\f}{\frac}
\newcommand{\ds}{\displaystyle}

\newenvironment{proof}{{\noindent\bf Proof.}}{\hfill$\Box$\\}

\begin{document}

\title{Projection onto simplicial cones by a semi-smooth Newton method
\thanks{{\it 1991 A M S Subject Classification.} Primary 90C33;
Secondary 15A48, {\it Key words and phrases.} Metric projection onto simplicial cones}}

\author{ O. P. Ferreira\thanks{IME/UFG, Campus II- Caixa Postal 131, 
Goi\^ania, GO, 74001-970, Brazil (e-mail:{\tt
orizon@ufg.br}).  The author was supported in part by
FAPEG, CNPq Grants 471815/2012-8,  303732/2011-3 and PRONEX--Optimization(FAPERJ/CNPq).}  
\and
S. Z. N\'emeth \thanks{School of Mathematics, The University of Birmingham, The Watson Building, 
Edgbaston, Birmingham B15 2TT, United Kingdom
(e-mail:{\tt nemeths@for.mat.bham.ac.uk}). The author was supported in part by 
the Hungarian Research Grant OTKA 60480.} }

\maketitle

\begin{abstract}
By using Moreau's decomposition theorem for projecting onto cones, the problem of projecting onto a simplicial cone is reduced to finding the unique solution of a 
nonsmooth system of equations. It is shown that a semi-smooth Newton method applied to the  system of equations associated  to the  problem of projecting onto a simplicial
cone is always  well defined, and  the generated sequence is   bounded  for any starting point and  under a somewhat restrictive assumption it is finite. Besides, under a 
mild assumption on the simplicial cone,  the generated sequence converges linearly to the  solution of the associated system of equations. 

\end{abstract}

\section{Introduction}

The interest in the subject of projection arises in several situations,   having a wide range of applications in pure and applied mathematics such as  Convex Analysis 
(see e.g. \cite{HiriartLemarecal1}),   Optimization (see  e.g. \cite{BuschkeBorwein96}, \cite{censor07}, \cite{censor01},  \cite{Frick1997}, \cite{scolnik08}, \cite{ujvari2007projection}), Numerical Linear Algebra 
(see e.g. \cite{Stewart77}), Statistics  (see e.g. \cite{BerkMarcus96}, \cite{Dykstra83}, \cite{Xiaomi1998}), Computer Graphics (see e.g. \cite{Fol90} ) and  Ordered 
Vector Spaces (see e.g. \cite{AbbasNemeth2012}, \cite{IsacNem86}, \cite{IsacNem92}, \cite{NemethNemeth2009}, \cite{Nemeth20091}, \cite{Nemeth2010-2}).   More specifically, the projection onto a polyhedral 
cone,  which has as a special case the projection onto a simplicial one, is a problem of high impact on scientific community\footnote{see the popularity of the 
Wikimization page Projection on Polyhedral Cone at http://www.convexoptimization.com/wikimization/index.php/Special:Popularpages}.  The geometric  nature of this problem 
makes it particularly interesting and important  in many areas of science and technology such as   Statistics~(see e.g. \cite{Xiaomi1998}), 
Computation~(see e.g. \cite{Huynh1992}), Optimization (see  e.g.\cite{Morillas2005}, \cite{ujvari2007projection}) and Ordered Vector Spaces (see e.g. \cite{NemethNemeth2009}). 

In this paper we particularize the Moreau's decomposition theorem for simplicial cones. 
This leads to an equivalence between the problem of projecting a point onto a simplicial cone 
and the one of finding the unique solution of a nonsmooth system of equations.   We apply  a semi-smooth Newton method for finding a unique solution of  the  obtained 
associated system.  We show that the method  is always  well defined and  the generated sequence is   bounded  for any starting point, and   under a somewhat
restrictive assumption it is finite. Besides, under a mild assumption on the simplicial cone,  the generated sequence converges linearly to the  solution of the 
associated  system of equations.  It is worth pointing out that a similar approach has been considered by Mangasarian in \cite{Mangasarian2009} for finding solutions of 
NP-hard absolute value equations.

%%%%%%%%%%%%%%%%%%%%%%%%%%%%%%%%%%%%%%%%%
%\subsection{Notations and auxiliary results} \label{sec:int.1}
\section{Preliminaries} \label{sec:int.1}
%%%%%%%%%%%%%%%%%%%%%%%%%%%%%%%%%%%%%%%%%

Consider $\R^m$ endowed with an orthogonal coordinate system and let 
$\lng\cdot,\cdot\rng$ be the canonical scalar product defined by it. Denote by $\|\cdot\|$ be
the norm generated by $\lng\cdot,\cdot\rng$.  If $a\in\R$ and $x=(x^1,\dots,x^m)\in\R^m$, then denote $a^+:=\max\{a,0\}$, $a^-:=\max\{-a,0\}$ and 
$$ 
x^+:=\left((x^1)^+,\dots,(x^m)^+\right).
$$
For $x\in \R^m$, the vector $\sgn(x)$ will denote a vector with components equal to $1$, $0$ or $-1$ depending on whether the corresponding component of the vector $x$ is positive, zero or negative. We will call a closed set $K\subset\R^m$ a \emph{cone} if the following conditions hold:
\begin{enumerate}
	\item $\lambda x+\mu y\in K$ for any $\lambda,\mu\ge0$ and $x,y\in K$,
	\item $x,-x\in K$ implies $x=0$.
\end{enumerate}
Let \(K \subset \R^n\) be a  closed convex cone. The {\it polar cone} of \( K\) is the set  
\[
 K^\perp:=\{ x\in \R^n \mid  \langle x, y \rangle\leq 0, \forall \, y\in K\}.
\]
For any positive integer $p$ denote by $I_p$ the $p\times p$ identity matrix. Denote $I_m=I$ and $\diag (x)$ will denote a diagonal matrix corresponding to  elements of $x$.
For an $m\times m$ matrix $M$ consider the norm defined by 
$\|M\|:=\max_{x\ne0}\{\|Mx\|~:~\|x\|=1\}$,  this definition implies
\begin{equation} \label{eq:np}
\|Mx\|\leq \|M\|\|x\|, \qquad \|LM\|\le\|L\|\|M\|, 
\end{equation}
for any $m\times m$ matrices $L$ and $M$. 

The following lemma is Theorem 3.1.4 on page 45 of \cite{DennisSchnabe1996}.

\begin{lemma}[Banach's Lemma] \label{lem:ban}
Let $E$  be $m\times m$ matrix  and   $I$ the $m\times m$ identity matrix. If  $\|E\|<1$,  then $E-I$
is invertible and  $ \|(E-I)^{-1}\|\leq 1/\left(1-
\|E\|\right). $
\end{lemma}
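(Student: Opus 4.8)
The plan is to treat invertibility and the norm bound as two separate steps, using nothing beyond the definition of the matrix norm and the submultiplicativity recorded in \eqref{eq:np}. Throughout I note that $E-I=-(I-E)$, so $E-I$ is invertible precisely when $I-E$ is, and the two inverses differ only by sign; in particular $\|(E-I)^{-1}\|=\|(I-E)^{-1}\|$, so it does not matter which of the two I estimate.

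First I would establish invertibility. Since $E-I$ is a square matrix, it suffices to show it is injective. Suppose $(E-I)x=0$ for some $x\in\R^m$; then $x=Ex$, and the first inequality in \eqref{eq:np} gives $\|x\|=\|Ex\|\le\|E\|\,\|x\|$. If $x\ne0$ I could divide by $\|x\|>0$ to obtain $1\le\|E\|$, contradicting the hypothesis $\|E\|<1$. Hence $x=0$, the map is injective, and therefore $E-I$ is invertible.

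For the norm estimate I would argue directly from the definition of $\|\cdot\|$. Fix an arbitrary $x\in\R^m$ and set $y:=(E-I)^{-1}x$, so that $(E-I)y=x$, i.e. $y=Ey-x$. Taking norms and using the triangle inequality together with \eqref{eq:np} yields $\|y\|\le\|Ey\|+\|x\|\le\|E\|\,\|y\|+\|x\|$. Rearranging gives $(1-\|E\|)\|y\|\le\|x\|$, and since $1-\|E\|>0$ I obtain $\|(E-I)^{-1}x\|=\|y\|\le\|x\|/(1-\|E\|)$. As this holds for every $x$, the definition of the induced matrix norm gives $\|(E-I)^{-1}\|\le 1/(1-\|E\|)$, as claimed.

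The argument is short and I do not anticipate a genuine obstacle; the only points requiring a little care are the appeal to finite-dimensionality in passing from injectivity to invertibility, and the sign bookkeeping relating $E-I$ to $I-E$. An alternative route would be the Neumann series $\sum_{k\ge0}E^k$, whose partial sums are Cauchy because $\|E^k\|\le\|E\|^k$ and $\sum_{k\ge0}\|E\|^k$ converges; its limit $S$ satisfies $(I-E)S=S(I-E)=I$ and $\|S\|\le\sum_{k\ge0}\|E\|^k=1/(1-\|E\|)$. This is more conceptual but requires invoking completeness of the space of matrices and a limit passage in the telescoping identity $(I-E)S_N=I-E^{N+1}$, so the direct estimate above is the cleaner choice here.
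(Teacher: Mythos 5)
Your proof is correct. Note that the paper itself offers no proof of this lemma: it simply cites Theorem 3.1.4 of Dennis and Schnabel, where the standard argument is the Neumann-series one you sketch as an alternative --- show that $\sum_{k\ge 0}E^k$ converges because $\|E^k\|\le\|E\|^k$ and the geometric series converges, identify the limit as $(I-E)^{-1}$ via the telescoping identity $(I-E)S_N=I-E^{N+1}$, and bound its norm by $1/(1-\|E\|)$. Your primary argument is genuinely different and, in this finite-dimensional setting, cleaner: injectivity of $E-I$ follows at once from $\|x\|=\|Ex\|\le\|E\|\,\|x\|$, invertibility then comes for free from the fact that an injective square matrix is invertible, and the bound $\|(E-I)^{-1}x\|\le\|x\|/(1-\|E\|)$ follows from a one-line triangle-inequality estimate, with no limit passage or appeal to completeness. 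What you give up is generality: the step ``injective implies invertible'' is exactly where finite-dimensionality enters, so your argument does not extend to operators on infinite-dimensional Banach spaces, whereas the Neumann-series proof does (and additionally yields the series representation of the inverse, which is sometimes useful in its own right). Both routes use only the definition of the induced norm and the submultiplicativity recorded in \eqref{eq:np}, and your sign bookkeeping relating $E-I$ to $I-E$ is handled correctly, so the proof stands as written and fully suffices for the way the lemma is used later in the paper.
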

%\begin{proof}  See Theorem 3.1.4, p. 45 of  \cite{DennisSchnabe1996}.
%\end{proof}
%%%%%%%%%%%%%%%%%%%%%%%%%
%\section{Preliminaries} \label{sec:prel}
%%%%%%%%%%%%%%%%%%%%%%%%%

Denote $\R^m_+=\{x=(x^1,\dots,x^m)\in\R^m:x_1\ge0,\dots,x^m\ge0\}$ the nonnegative orthant.
Let $A$ be an $m\times m$ nonsingular matrix. Then, the cone 
$$
K:=A\R^m_+, 
$$
 is called a  \emph{simplicial cone}.  Let $z\in \R^m$, then  the {\it projection $P_K(z)$ of the point $z$ onto the cone  \( K\)} is defined by 
$$
P_K(z):=\argmin \left\{\|z-y\|~:~ y\in K\right\}.
$$
\begin{remark} \label{re:ppo}
It easy to see that $P_{\R^m_+}(z)=z^{+}$ and it is well know that the projection onto a convex set  is continuous and nonexpansive, see \cite{HiriartLemarecal1}. 
\end{remark}
The above remark shows that  projection onto the  nonnegative orthant is an easy problem.  On the other hand, the projection onto a  general simplicial cone is 
difficult and computationally expensive, this problem has been studied e.g. in \cite{Frick1997,ujvari2007projection,NemethNemeth2009,EkartNemethNemeth2009}.  The statement
of the  problem that we are interested   is:
\begin{problem}[{\bf projection onto a simplicial cone}] \label{prob:pp}
 {\it   Given  $A$    an  $m\times m$ nonsingular matrix  and  $z\in \R^m$,  find  the  projection $P_K(z)$ of the point $z$ onto the  simplicial cone \( K=A\R^m_+\).}
\end{problem}
As we will see in the next section, by using Moreau's decomposition theorem for projecting onto cones,  solving Problem~\ref{prob:pp} is reduced to solving the following  
problem.
\begin{problem}[{\bf nonsmooth equation}]  \label{prob:ne}
 {\it   Given  $A$ an $m\times m$ nonsingular matrix   and  $z\in \R^m$,  find  the unique  solution $u$ of the nonsmooth equation}
\begin{equation}\label{equation}
		\left(A^\top A-I\right)x^++x=A^\top z.
\end{equation} 
In this case, $P_K(z)=Au^+$ where $K=A\R^m_+$.
\end{problem}
We will show in Section~\ref{sec:ssnm} that Problem~\ref{prob:ne} can   be solved by using a semi-smooth Newton method.

%%%%%%%%%%%%%%%%%%%%%%%%%%%%
\section{Moreau's decomposition theorem for simplicial cones}
%%%%%%%%%%%%%%%%%%%%%%%%%%%
We recall the following result of Moreau \cite{Moreau1962}:
\begin{theorem} [Moreau's decomposition theorem]
Let $K,L\subseteq\R^m$ be two mutually polar cones in $\R^m$. Then, the following 
statements are equivalent:
\begin{enumerate}
	\item $z=x+y,~x\in K,~y\in L$ and $\langle x,y\rangle=0$,
	\item $x=P_K(z)$ and $y=P_L(z)$.
\end{enumerate}
\end{theorem}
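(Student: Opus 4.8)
The plan is to reduce everything to the variational (obtuse-angle) characterization of the metric projection onto a closed convex cone and then exploit the symmetry between $K$ and $L$. First I would record the standard fact that for a closed convex set $C\subseteq\R^m$ and $z\in\R^m$, the point $p=P_C(z)$ is the unique element of $C$ satisfying $\lng z-p,\, c-p\rng\le 0$ for every $c\in C$ (the projection is well defined, being continuous and nonexpansive as recalled in Remark~\ref{re:ppo}). I would then specialize this to a closed convex cone $C$: testing the inequality with $c=0\in C$ and with $c=2p\in C$ (both admissible because $C$ is a cone) yields $\lng z-p,p\rng\ge 0$ and $\lng z-p,p\rng\le 0$, hence $\lng z-p,p\rng=0$; substituting this back gives $\lng z-p,c\rng\le 0$ for all $c\in C$, i.e. $z-p\in C^\perp$. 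Conversely, if $p\in C$, $z-p\in C^\perp$ and $\lng z-p,p\rng=0$, then $\lng z-p,c-p\rng=\lng z-p,c\rng\le 0$ for all $c\in C$, so $p=P_C(z)$. This establishes the key characterization: for a closed convex cone $C$, one has $p=P_C(z)$ if and only if $p\in C$, $z-p\in C^\perp$ and $\lng p,\,z-p\rng=0$.

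With this in hand, the equivalence follows quickly. For the implication (ii) $\Rightarrow$ (i), set $x=P_K(z)$ and $y=z-x$; the characterization applied to $C=K$ gives $x\in K$, $y=z-x\in K^\perp=L$ and $\lng x,y\rng=0$, which is precisely (i). For (i) $\Rightarrow$ (ii), suppose $z=x+y$ with $x\in K$, $y\in L$ and $\lng x,y\rng=0$. Since $L=K^\perp$ we have $z-x=y\in K^\perp$ and $\lng x,\,z-x\rng=\lng x,y\rng=0$, so the characterization forces $x=P_K(z)$. The mutual polarity of $K$ and $L$ means $K=L^\perp$ as well, so the same argument with the roles of $K,L$ and $x,y$ interchanged gives $z-y=x\in L^\perp$ and $\lng y,\,z-y\rng=\lng x,y\rng=0$, whence $y=P_L(z)$.

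The only genuinely delicate step is the derivation of the orthogonality relation $\lng z-p,p\rng=0$ in the cone characterization; this is what upgrades the one-sided variational inequality into the clean three-part description used above, and it is the place where the cone structure (closure under the scalings $0\cdot p$ and $2\cdot p$) is essential. Everything else is symmetric bookkeeping, the symmetry being guaranteed by the hypothesis that $K$ and $L$ are mutually polar, so that $K^\perp=L$ and $L^\perp=K$ hold simultaneously. I would present the cone characterization as a short preliminary lemma and then deduce both implications from it in a couple of lines each.
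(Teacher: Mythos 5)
The paper does not prove this theorem at all: it is recalled as a known result with a citation to Moreau's original note \cite{Moreau1962}, and the later results (Lemma~\ref{pm}, imported from \cite{AbbasNemeth2012}) simply invoke it. Your argument therefore cannot match the paper's proof, but it is the standard self-contained route: derive from the obtuse-angle variational inequality the three-part characterization that $p=P_C(z)$ if and only if $p\in C$, $z-p\in C^\perp$ and $\langle p,z-p\rangle=0$, and then read off both implications. Your derivation of the orthogonality relation by testing the variational inequality at $c=0$ and $c=2p$ is exactly right, and this is indeed where the cone structure (rather than mere convexity) enters. Note also that your proof needs nothing beyond closedness and convexity of $K$ and $L$, so it is in fact slightly more general than the paper's standing definition of cone, which additionally requires pointedness. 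What the paper's choice buys is brevity; what yours buys is a self-contained paper.

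There is one small logical slip to repair. In your proof of (ii) $\Rightarrow$ (i) you \emph{define} $y:=z-x$, whereas in statement (ii) the vector $y$ is \emph{given} as $P_L(z)$. As written, your argument shows that the pair $\bigl(P_K(z),\,z-P_K(z)\bigr)$ satisfies (i); it does not yet identify $z-P_K(z)$ with $P_L(z)$, which is what (ii) $\Rightarrow$ (i) requires. The gap closes in one line with material you already have: the pair $(x,z-x)$ with $x=P_K(z)$ satisfies the three conditions of your characterization for the cone $L$, since $z-x\in L$, $z-(z-x)=x\in K=L^\perp$ by mutual polarity, and $\langle z-x,x\rangle=0$; hence $z-x=P_L(z)=y$, so $z=x+y$ and $\langle x,y\rangle=0$. (Equivalently, first prove (i) $\Rightarrow$ (ii) and then apply it to the pair $(x,z-x)$.) With that rearrangement your proof is complete and correct.
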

\noindent
The following result follows from the definition  of the polar. For a proof see for example \cite{AbbasNemeth2012}.

\begin{lemma}\label{lad}  
Let $A$ be an $m\times m$ nonsingular matrix. Then,
\[
(A\mathbb{R}^m_+)^\perp=-(A^\top)^{-1}\mathbb{R}^m_+.
\]
\end{lemma}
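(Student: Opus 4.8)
The plan is to unfold the definition of the polar cone and transport the defining inequality to the nonnegative orthant via the adjoint of $A$. By definition, $x\in(A\R^m_+)^\perp$ precisely when $\lng x,y\rng\le0$ for every $y\in A\R^m_+$. Since every such $y$ has the form $y=Aw$ with $w\in\R^m_+$, and conversely every $Aw$ with $w\in\R^m_+$ lies in the cone, I would rewrite the defining inequality using the adjoint identity $\lng x,Aw\rng=\lng A^\top x,w\rng$. Thus $x\in(A\R^m_+)^\perp$ if and only if $\lng A^\top x,w\rng\le0$ for all $w\in\R^m_+$, that is, if and only if $A^\top x\in(\R^m_+)^\perp$.

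The next step is the elementary computation $(\R^m_+)^\perp=-\R^m_+$. For this I would observe that $\lng v,w\rng\le0$ for all $w\ge0$ forces $v^i=\lng v,e_i\rng\le0$ upon testing against the standard basis vectors $e_i\in\R^m_+$, whence $v\in-\R^m_+$; the reverse inclusion is immediate. Combining this with the previous reduction gives $x\in(A\R^m_+)^\perp$ if and only if $A^\top x\in-\R^m_+$. Since $A$ is nonsingular, $A^\top$ is invertible, so this last condition is equivalent to $x\in(A^\top)^{-1}(-\R^m_+)=-(A^\top)^{-1}\R^m_+$, which is exactly the claimed identity.

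Because every step above is a genuine equivalence rather than a one-way implication, both inclusions are obtained simultaneously and no separate containment argument is required. I do not anticipate a real obstacle here; the only points that deserve a little care are the correct use of the adjoint identity $\lng x,Aw\rng=\lng A^\top x,w\rng$ and the appeal to nonsingularity of $A$, which is precisely what guarantees that $(A^\top)^{-1}$ exists so that the orthant can be pulled back cleanly to describe $(A\R^m_+)^\perp$.
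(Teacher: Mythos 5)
Your proof is correct: the equivalences via the adjoint identity, the computation $(\R^m_+)^\perp=-\R^m_+$ by testing against the standard basis, and the pullback through the invertible matrix $A^\top$ are all sound. The paper itself does not spell out a proof (it remarks that the result follows from the definition of the polar and cites a reference), and your argument is precisely that direct verification, so it matches the intended approach.
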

The following result has been proved in \cite{AbbasNemeth2012} by using Moreau's decomposition theorem and Lemma~\ref{lad}.

\begin{lemma}\label{pm}
	Let $A$ be an $m\times m$ nonsingular matrix and $K=A\R^m_+$ the corresponding simplicial cone.
	Then, for any $z\in\R^m$ there exists a unique $x\in\R^m$ such that the following two 
	equivalent statements hold:
	\begin{enumerate}
		\item $z=Ax^+-(A^\top)^{-1}x^-,~x\in\R^m$,
		\item $Ax^+=P_K(z)$ and $-(A^\top)^{-1}x^-=P_{K^\perp}(z)$.
	\end{enumerate}
\end{lemma}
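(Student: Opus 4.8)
The plan is to apply Moreau's decomposition theorem directly, taking $K=A\R^m_+$ and its polar $L:=K^\perp=-(A^\top)^{-1}\R^m_+$ as given by Lemma~\ref{lad}, and then to repackage the two pieces of the decomposition into a single vector $x$ through its positive and negative parts. First I would use Moreau's theorem to write, for the given $z\in\R^m$, the decomposition $z=P_K(z)+P_{K^\perp}(z)$ with $\lng P_K(z),P_{K^\perp}(z)\rng=0$. Since $P_K(z)\in A\R^m_+$ and $P_{K^\perp}(z)\in-(A^\top)^{-1}\R^m_+$, there exist $v,w\in\R^m_+$ with $P_K(z)=Av$ and $P_{K^\perp}(z)=-(A^\top)^{-1}w$.

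The key step is to convert the orthogonality condition into a complementarity condition on $v$ and $w$, and I expect this to be the heart of the argument. Using the identity $\lng Av,(A^\top)^{-1}w\rng=\lng v,A^\top(A^\top)^{-1}w\rng=\lng v,w\rng$, the orthogonality $\lng P_K(z),P_{K^\perp}(z)\rng=-\lng v,w\rng=0$ becomes $\sum_{i=1}^m v^i w^i=0$. As $v,w\ge0$ componentwise, every term is nonnegative, so $v^i w^i=0$ for each $i$; it is precisely this disjointness of supports that lets $v$ and $w$ serve as the positive and negative parts of one vector. Setting $x:=v-w$ and checking the three cases ($v^i>0$, $w^i>0$, or both zero) componentwise yields $x^+=v$ and $x^-=w$, hence $z=Ax^+-(A^\top)^{-1}x^-$ together with $Ax^+=P_K(z)$ and $-(A^\top)^{-1}x^-=P_{K^\perp}(z)$. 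This establishes existence and produces an $x$ satisfying both (i) and (ii).

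For the equivalence of (i) and (ii), both implications follow from the equivalence in Moreau's theorem. Given any $x$ satisfying (i), I would set $p:=Ax^+\in K$ and $q:=-(A^\top)^{-1}x^-\in L$; since $\lng Ax^+,-(A^\top)^{-1}x^-\rng=-\lng x^+,x^-\rng=0$ because $x^+$ and $x^-$ always have disjoint support, Moreau's theorem forces $p=P_K(z)$ and $q=P_{K^\perp}(z)$, which is (ii), while (ii) returns (i) at once by summing the two projections. Finally, for uniqueness, if $x_1$ and $x_2$ both satisfy (i) then both yield the same Moreau decomposition, so $Ax_1^+=Ax_2^+$ and $(A^\top)^{-1}x_1^-=(A^\top)^{-1}x_2^-$; nonsingularity of $A$ gives $x_1^+=x_2^+$ and $x_1^-=x_2^-$, whence $x_1=x_1^+-x_1^-=x_2^+-x_2^-=x_2$.
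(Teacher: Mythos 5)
Your proof is correct and takes essentially the same route as the paper's: the paper does not reprove this lemma but cites \cite{AbbasNemeth2012}, noting the proof there combines Moreau's decomposition theorem with Lemma~\ref{lad}, which is exactly your strategy (decompose $z$ via Moreau, represent the two projections as $Av$ and $-(A^\top)^{-1}w$ with $v,w\ge 0$, and turn orthogonality into the complementarity $v^iw^i=0$ so that $x:=v-w$ packages both pieces). Your handling of the equivalence and uniqueness via nonsingularity of $A$ is also complete and correct.
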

 The  following result is a direct consequence of Lemma \ref{pm}, it shows that solving Problem~\ref{prob:pp} is reduced to solving Problem~\ref{prob:ne}.
\begin{lemma}\label{pm2}
	Let $A$ be a nonsingular matrix, $K=A\R^m_+$ the corresponding simplicial cone
	and  $z\in\R^m$ arbitrary. Then, equation \eqref{equation} has a unique solution $u$ and $P_K(z)=Au^+$.
\end{lemma}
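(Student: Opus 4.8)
The plan is to show that the nonsmooth equation \eqref{equation} is algebraically equivalent to statement (i) of Lemma~\ref{pm}, namely the decomposition $z=Ax^+-(A^\top)^{-1}x^-$, and then to invoke Lemma~\ref{pm} directly to read off existence, uniqueness, and the identity $P_K(z)=Au^+$. Since Lemma~\ref{pm} already supplies a unique $x$ satisfying the decomposition together with the projection formula, the only task is to match that decomposition to \eqref{equation}.

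First I would transform the decomposition in Lemma~\ref{pm}(i) into \eqref{equation}. Multiplying $z=Ax^+-(A^\top)^{-1}x^-$ on the left by $A^\top$ gives $A^\top z=A^\top Ax^+-x^-$, because $A^\top(A^\top)^{-1}=I$. Using the elementary componentwise identity $x=x^+-x^-$, equivalently $x^-=x^+-x$, I would substitute to obtain $A^\top z=A^\top Ax^+-(x^+-x)=(A^\top A-I)x^++x$, which is precisely \eqref{equation}.

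Conversely, I would check that every solution of \eqref{equation} satisfies Lemma~\ref{pm}(i). Starting from $(A^\top A-I)x^++x=A^\top z$, I rewrite the left-hand side as $A^\top Ax^+-x^++x=A^\top Ax^+-x^-$, again using $x=x^+-x^-$. Multiplying on the left by $(A^\top)^{-1}$ then yields $z=Ax^+-(A^\top)^{-1}x^-$. This closes the equivalence: a vector solves \eqref{equation} if and only if it realizes the Moreau-type decomposition of Lemma~\ref{pm}.

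Finally, Lemma~\ref{pm} guarantees that for each $z\in\R^m$ there is a unique $x$ satisfying statement (i), and that for this $x$ one has $Ax^+=P_K(z)$. By the equivalence just established, this same $x$ is the unique solution $u$ of \eqref{equation}, whence $P_K(z)=Au^+$ follows at once. I do not expect a substantial obstacle here, as all the analytic content is carried by Lemma~\ref{pm}; the remaining work is the routine manipulation of multiplying by $A^\top$ or $(A^\top)^{-1}$ and applying $x=x^+-x^-$. The only point requiring care is correctly tracking the $(A^\top)^{-1}$ factor so that the forward and reverse directions line up exactly.
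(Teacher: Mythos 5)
Your proposal is correct and follows essentially the same route as the paper: both multiply the decomposition in Lemma~\ref{pm}(i) by $A^\top$, use the identity $x=x^+-x^-$ to arrive at \eqref{equation}, and then invoke Lemma~\ref{pm} for existence, uniqueness, and $P_K(z)=Au^+$. Your explicit verification of the reverse direction is a minor elaboration of what the paper compresses into the word ``equivalently'' (justified since multiplication by the nonsingular matrix $A^\top$ is reversible), so there is no substantive difference.
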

\begin{proof}
 Since $A$ is  an $m\times m$ nonsingular matrix, multiplying by $A^\top$, item (i) of Lemma \ref{pm} is  equivalently  transformed into 
$$
A^\top Ax^+ - x^-=A^\top z.
$$	
As $-x^-=x-x^+$, the above equation is equivalent to \eqref{equation}.  Therefore, equation \eqref{equation}  is equivalent to the equation in item (i) of Lemma \ref{pm}. 
Hence, we conclude from   Lemma \ref{pm} that equation \eqref{equation} has a unique solution $u$ and  $P_K(z)=Au^+$.
\end{proof}
%%%%%%%%%%%%%%%%%%%%%%%%%%
\section{Semi-smooth Newton method} \label{sec:ssnm}
%%%%%%%%%%%%%%%%%%%%%%%%%%
The {\it semi-smooth Newton iteration} for solving equation \eqref{equation} or equivalently for  finding the zero of the function 
$$
F(x):=\left(A^\top A-I\right)x^++x-A^\top z,  
$$ 
   is formally  defined by 
\begin{equation} \label{eq:nm}
F(x_k)+ S_k\left(x_{k+1}-x_{k}\right)=0,  \qquad   S_k \in \partial F(x_k), \qquad k=0,1,2,\ldots,
\end{equation}
where   $ \partial F(x_k)$  denotes the  Clarke generalized Jacobian of $F$ at $x_k$.  From the definition  of  Clarke generalized Jacobian of the function $F$ at $x$ 
(see  \cite{Clarke1990}) it easy to conclude that
$$
\left(A^\top A-I\right)\diag(\sgn(x^+) )+I\in  \partial F(x).
$$
Therefore, by using the last inclusion, the semi-smooth Newton iteration \eqref{eq:nm}   reduces to
\begin{equation*}
	\left(A^\top A-I\right)x_k^++x_k-A^\top z+\left(\left(A^\top A-I\right)\diag(\sgn(x_k^+))+I\right)(x_{k+1}-x_k)=0, 
\end{equation*}
and as $\diag(\sgn(x_k^+))x_k=x_k^+$, the  latter equality becomes
\begin{equation}\label{eq:mss}
	\left(\left(A^\top A-I\right)\diag(\sgn(x_k^+))+I\right)x_{k+1}=A^\top z,   \qquad k=0, 1, 2, \ldots , 
\end{equation}
which   formally  defines  a sequence  $\{x_k\}$ with starting point $x_0$,  called  the {\it semi-smooth Newton sequence} for solving equation \eqref{equation} or  for  projecting a point  $z\in \R^m$ onto the simplicial cone $K$.
\begin{lemma}\label{nonsing}
Let $A$  be an  $m\times m$ nonsingular matrix. Then the matrix 
\begin{equation} \label{eq;nma}
\left(A^\top A-I\right)\diag(\sgn(x^+))+I,
\end{equation}
is nonsingular for all $x\in \R^m$.  As a consequence, the semi-smooth Newton sequence $\{x_k\}$   is well defined from any starting point.
\end{lemma}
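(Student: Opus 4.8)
The plan is to show that the matrix in \eqref{eq;nma} has trivial kernel, exploiting two facts. First, $D:=\diag(\sgn(x^+))$ is a \emph{symmetric idempotent} $0$--$1$ diagonal matrix: its diagonal entries equal $1$ at indices $i$ where $x^i>0$ and $0$ otherwise, so $D^\top=D$ and $D^2=D$. Second, $A^\top A$ is symmetric positive definite because $A$ is nonsingular. The strategy is to reduce the nonsingularity to this positive definiteness.

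First I would take an arbitrary $v\in\R^m$ with $\left((A^\top A-I)D+I\right)v=0$ and aim to deduce $v=0$. Setting $u:=Dv$, the equation rearranges to $v=(I-A^\top A)u$. Applying $D$ to both sides and using $Du=D^2v=Dv=u$, I obtain $u=Du-DA^\top A u=u-DA^\top A u$, that is, $DA^\top A u=0$.

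The key step is then to pair this identity with $u$. Since $D$ is symmetric and $Du=u$,
$$
0=\langle u,\,DA^\top A u\rangle=\langle Du,\,A^\top A u\rangle=\langle u,\,A^\top A u\rangle=\|Au\|^2.
$$
Hence $Au=0$, and nonsingularity of $A$ forces $u=0$; then $v=(I-A^\top A)u=0$. This proves that \eqref{eq;nma} has trivial kernel and is therefore nonsingular, for every $x\in\R^m$. The consequence is then immediate: in the recursion \eqref{eq:mss} the coefficient matrix is exactly \eqref{eq;nma} evaluated at $x_k$, so it is invertible and $x_{k+1}=\left((A^\top A-I)\diag(\sgn(x_k^+))+I\right)^{-1}A^\top z$ is uniquely determined at each step; thus the semi-smooth Newton sequence $\{x_k\}$ is well defined from any starting point $x_0$.

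I do not expect a serious obstacle; the only delicate point is recognizing the idempotence and symmetry of $D$ and arranging the algebra so that positive definiteness of $A^\top A$ enters through $\|Au\|^2$. An alternative, equally short route would be to permute coordinates so that $D=\left(\begin{smallmatrix} I_p & 0\\ 0 & 0\end{smallmatrix}\right)$, observe that \eqref{eq;nma} then becomes block lower triangular with diagonal blocks equal to the leading principal submatrix $(A^\top A)_{11}$ and $I_{m-p}$, and note that the determinant of \eqref{eq;nma} equals $\det (A^\top A)_{11}>0$, since every principal submatrix of a positive definite matrix is positive definite.
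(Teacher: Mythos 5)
Your kernel argument is correct, and it takes a genuinely different route from the paper. The paper's proof is combinatorial-algebraic: it writes $C=(B-I)\diag(\sgn(x^+))+I$ with $B=A^\top A$, finds a permutation matrix $\Pi$ such that $\Pi^\top C\Pi$ is upper block triangular with diagonal blocks $I_k$ and a principal submatrix $F$ of $B$, and then invokes the fact that every principal minor of a positive definite matrix is nonzero (Corollary 7.1.5 of Horn--Johnson) to get $\det C=\det F\neq 0$; note that the alternative you sketch in your last sentence is essentially this proof, up to the ordering of the blocks. Your primary argument instead shows the kernel is trivial by pure matrix algebra: from $\bigl((A^\top A-I)D+I\bigr)v=0$ with $u=Dv$ you derive $DA^\top Au=0$, and then symmetry and idempotence of $D$ give $0=\langle u,DA^\top Au\rangle=\langle u,A^\top Au\rangle=\|Au\|^2$, so $u=0$ and hence $v=(I-A^\top A)u=0$. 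What each approach buys: the paper's proof yields the extra information that $\det C$ equals a (positive) principal minor of $A^\top A$, which is occasionally useful; your proof is more self-contained (no permutation matrices, no external theorem on principal minors) and strictly more general, since it never uses that $D$ is diagonal --- the same computation shows that $(B-I)P+I$ is nonsingular whenever $\langle u,Bu\rangle>0$ for all $u\neq 0$ and $P$ is any symmetric idempotent matrix, i.e., any orthogonal projection. Both proofs, of course, conclude well-definedness of the iteration \eqref{eq:mss} in the same immediate way.
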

\begin{proof}
 To simplify the notations define  $B=A^\top A$.  Thus, the matrix in \eqref{eq;nma} becomes
$$
C:=(B-I)\diag(\sgn(x^+) )+I.  
$$
Hence, to prove the first part of the lemma we need to show that $C$ is a nonsingular matrix. 
	Let $x=(x^1,\dots,x^m)$ and the sets  
	%$P(x)=\{i\in\{1,\dots,m\}:(x^i)^+>0\}$ 
	$P(x)=\{i\in\{1,\dots,m\}:x^i>0\}$ 
	and $\tilde P(x)=\{1,\dots,m\} \backslash P(x)$. For  an $m\times m$ matrix $M$ define $M^{P(x)}$ and  $M^{\tilde P(x)}$  the matrices  defined, respectively,  by 
	\[
	M^{P(x)}_{ij}=\left\{
	\begin{array}{lll}
		M_{ij}&\textrm{ if }j\in P(x),\\
		0     &\textrm{ if }j\notin P(x).
	\end{array}
	\right. \qquad
	M^{\tilde P(x)}_{ij}=\left\{
	\begin{array}{lll}
		M_{ij}&\textrm{ if }j\in \tilde P(x),\\
		0     &\textrm{ if }j\notin \tilde P(x).
	\end{array}
	\right.
	\]
	Let $k:=m-|P(x)|$. Then, from the definitions of the matrix $C$ and the index sets $P(x)$ and ${\tilde P(x)}$, it is easy to see that 
	$$
	C=B^{P(x)}+I^{\tilde P(x)}, 
	$$ and
	there is an $m\times m$ permutation matrix $\Pi$ such that
	$\Pi^\top C \Pi$ is upper block triangular of the form
	\begin{equation} \label{eq:pm}
	\Pi^\top C \Pi=
	\left[\begin{array}{cc} I_k & E\\0 & F\end{array}\right],
	\end{equation}
	where $F$ is a principal submatrix of $B$. Since $A$ is  nonsingular,   $B=A^\top A$ is positive definite and $F$ is a principal submatrix of $B$,  we conclude  that the principal minor $\det F\ne0$ (see Corollary~ 7.1.5 of \cite{HornJohnson1985}).  Hence,  using \eqref{eq:pm} it follows that
	\[
	\det(\Pi^\top)\det C\det\Pi=\det(\Pi^\top C \Pi)=\det I_k\det F=
	\det F\ne0.
	\] 
	As $\det(\Pi^\top)=(\det\Pi)^{-1}$, the last relation implies $\det C=\det F\neq0$, hence the first part of the lemma is proven.
	
The proof of the second part of the lemma is immediate consequence of the definition of the sequence $\{x_k\}$ in \eqref{eq:mss} and the first part of the lemma.
\end{proof}

The next proposition give a condition for the Newton iteration \eqref{eq:mss} to finish in a finite
number of steps.
\begin{proposition}
	If in \eqref{eq:mss} it happens that $\sgn(x_{k+1}^+)=\sgn(x_k^+)$, then 
	$x_{k+1}$ solves equation \eqref{equation} and $P_K(z)=Ax_{k+1}^+$.
\end{proposition}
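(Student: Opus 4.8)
The plan is to show that the hypothesis $\sgn(x_{k+1}^+)=\sgn(x_k^+)$ forces the iterate $x_{k+1}$ produced by \eqref{eq:mss} to satisfy the defining equation \eqref{equation}, after which the assertion about the projection follows at once from the uniqueness statement in Lemma~\ref{pm2}.

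First I would record the elementary coordinatewise identity $\diag(\sgn(x^+))x=x^+$, valid for every $x\in\R^m$. This is precisely the relation already invoked in passing from the Newton iteration \eqref{eq:nm} to \eqref{eq:mss}: the diagonal entry of $\diag(\sgn(x^+))$ equals $1$ on the coordinates where $x^i>0$ and $0$ otherwise, so the product with $x$ reproduces the positive part in each coordinate.

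Next I would substitute the hypothesis into \eqref{eq:mss}. Because $\sgn(x_{k+1}^+)=\sgn(x_k^+)$, the diagonal matrices coincide and I may replace $\diag(\sgn(x_k^+))$ by $\diag(\sgn(x_{k+1}^+))$ in the coefficient matrix, so that \eqref{eq:mss} reads
\[
\left(A^\top A-I\right)\diag(\sgn(x_{k+1}^+))\,x_{k+1}+x_{k+1}=A^\top z.
\]
Applying the identity from the previous step with $x=x_{k+1}$ turns the first term into $\left(A^\top A-I\right)x_{k+1}^+$, and the displayed equation becomes exactly \eqref{equation} evaluated at $x=x_{k+1}$. Hence $x_{k+1}$ is a solution of \eqref{equation}.

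Finally, by Lemma~\ref{pm2} the equation \eqref{equation} has a \emph{unique} solution $u$ and $P_K(z)=Au^+$; since $x_{k+1}$ is a solution it must equal $u$, whence $P_K(z)=Ax_{k+1}^+$, completing the argument. I do not anticipate any real obstacle here: the entire content is the bookkeeping identity $\diag(\sgn(x^+))x=x^+$ together with the observation that the hypothesis makes the two diagonal matrices identical, so the only point requiring care is the sign convention defining $\diag(\sgn(x^+))$ on the nonpositive coordinates.
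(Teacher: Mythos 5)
Your proof is correct and follows essentially the same route as the paper's: replace $\diag(\sgn(x_k^+))$ by $\diag(\sgn(x_{k+1}^+))$ using the hypothesis, apply the identity $\diag(\sgn(x_{k+1}^+))x_{k+1}=x_{k+1}^+$ to recover equation \eqref{equation}, and invoke Lemma~\ref{pm2} for uniqueness and the projection formula. No gaps to report.
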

\begin{proof}
	If $\sgn(x_{k+1}^+)=\sgn(x_k^+)$ in equation \eqref{eq:mss}, then it becomes 
	\begin{equation}\label{ek}
		\left(\left(A^\top A-I\right)\diag(\sgn(x_{k+1}^+))+I\right)x_{k+1}=A^\top z.
	\end{equation}
	Since $\diag(\sgn(x_{k+1}^+))x_{k+1}=x_{k+1}^+$, \eqref{ek} yields
	\[
	\left(A^\top A-I\right)x_{k+1}^++x_{k+1}=A^\top z, 
	\] 
	which implies that  $x_{k+1}$ is a solution of   \eqref{equation} and, by using Lemma~\ref{pm2}, we have $P_K(z)=Ax_{k+1}^+$.
\end{proof}

The next proposition shows that the semi-smooth Newton sequence $\{x_k\}$ is bounded and gives a formula for any accumulation point of it.
\begin{proposition} \label{pr:bounded}
         The semi-smooth Newton sequence $\{x_k\}$ is bounded from any starting point. Moreover, for each accumulation point $\bar x$ of  $\{x_k\}$ there exists 
	 $\hat x \in \R^m$ such that $$\left(\left(A^\top A-I\right) \diag(\sgn(\hat{x}^+))+I\right){\bar x}=A^\top z.$$
\end{proposition}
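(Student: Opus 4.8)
The plan is to exploit the fact that the diagonal matrix $\diag(\sgn(x_k^+))$ appearing in \eqref{eq:mss} can assume only finitely many values, which forces the iterates themselves into a finite set. Since $x^+$ has nonnegative components, each entry of $\sgn(x_k^+)$ lies in $\{0,1\}$, so $\diag(\sgn(x_k^+))$ ranges over the at most $2^m$ diagonal $0$--$1$ matrices $\diag(s)$ with $s\in\{0,1\}^m$. By Lemma~\ref{nonsing} every matrix of the form $(A^\top A-I)\diag(s)+I$ is nonsingular, so from \eqref{eq:mss} we may solve for $x_{k+1}$ and write $x_{k+1}=\big((A^\top A-I)\diag(\sgn(x_k^+))+I\big)^{-1}A^\top z$. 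The right-hand side depends on $k$ only through $\sgn(x_k^+)$, hence $x_{k+1}$ lies in the finite set $\mathcal{S}:=\{\,((A^\top A-I)\diag(s)+I)^{-1}A^\top z : s\in\{0,1\}^m\,\}$.

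This immediately yields boundedness: for every $k\ge1$ we have $x_k\in\mathcal S$, and $\mathcal S$ is finite, so $\{x_k\}_{k\ge0}$, being $x_0$ together with a sequence in the bounded set $\mathcal S$, is bounded regardless of the starting point.

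For the second assertion I would invoke the pigeonhole principle. Since $\{x_k\}_{k\ge1}\subseteq\mathcal S$ with $\mathcal S$ finite, any accumulation point $\bar x$ of $\{x_k\}$ must actually be an element of $\mathcal S$ that is attained for infinitely many indices; in particular there is an index $k\ge1$ with $x_k=\bar x$. For that index, \eqref{eq:mss} reads $\big((A^\top A-I)\diag(\sgn(x_{k-1}^+))+I\big)\bar x=A^\top z$, so setting $\hat x:=x_{k-1}$ produces exactly the claimed identity.

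The argument is fundamentally a finiteness observation rather than a convergence estimate, so there is no serious analytic obstacle; the only point deserving care is the justification that an accumulation point of a sequence eventually valued in a finite set is itself attained infinitely often, so that a genuine predecessor $x_{k-1}$ with $k\ge1$ is available to serve as $\hat x$. This is the step where one must avoid conflating \emph{accumulation point} with \emph{limit}, but once the iterates are confined to the finite set $\mathcal S$ the distinction dissolves and the conclusion follows at once.
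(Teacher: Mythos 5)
Your proof is correct, and it takes a genuinely different --- and more elementary --- route than the paper's. The paper proves boundedness by contradiction: it extracts, via compactness of the unit sphere and the finiteness of the sign patterns $\sgn(x_k^+)\in\{0,1\}^m$, a subsequence with $\|x_{k_j+1}\|\to\infty$, $x_{k_j+1}/\|x_{k_j+1}\|\to v\ne0$ and $\sgn(x_{k_j}^+)$ constant, divides \eqref{eq:mss} by $\|x_{k_j+1}\|$ and lets $j\to\infty$ to obtain $\bigl(\bigl(A^\top A-I\bigr)\diag(s)+I\bigr)v=0$ for a fixed $0$--$1$ pattern $s$, contradicting Lemma~\ref{nonsing}; the accumulation-point identity is then obtained by another subsequence extraction with constant sign pattern followed by a passage to the limit in \eqref{eq:mss}. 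You use the same two ingredients --- Lemma~\ref{nonsing} and the finiteness of sign patterns --- but deploy them before any limit is taken: inverting the iteration matrix shows that $x_{k+1}$ lies in the finite set $\mathcal{S}$ of at most $2^m$ points, so boundedness is immediate and the accumulation-point statement degenerates into a pigeonhole fact, with $\hat x:=x_{k-1}$ for any index $k\ge1$ at which $\bar x$ is attained. Your argument in fact yields strictly more than the proposition asserts: every accumulation point is an actual iterate attained infinitely often, and since $x_{k+1}$ is a deterministic function of $x_k$ (through $\sgn(x_k^+)$), the tail of the sequence is eventually periodic. What the paper's softer subsequence-and-limit argument buys instead is robustness: it would survive perturbations (e.g.\ inexact solves or a right-hand side varying with $k$), where the exact finite-range observation underpinning your proof would no longer be available. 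The one delicate step you flagged --- that an accumulation point of a sequence eventually valued in a finite set is attained infinitely often --- is handled correctly: distinct points of $\mathcal{S}\cup\{\bar x\}$ are separated by a positive distance, so any subsequence converging to $\bar x$ must eventually equal $\bar x$.
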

\begin{proof}
 	First suppose that $\{x_k\}$ is unbounded. Then,  since $\{x_k\}$ is unbounded, the unit sphere is compact, and there are only finitely many vectors 
	$\sgn(x_{k}^+)$ with coordinates $0$ or $1$, it follows that there exists a vector $\tilde{x}\in\R^m$ and a subsequence $\{x_{k_j}\}$ of $\{x_k\}$ such that 
	\begin{equation} \label{eq:sssp}
	\lim_{j\to\infty}{\|x_{k_j+1}\|}=\infty,  \qquad \lim_{j\to\infty}\frac{x_{k_j+1}}{\|x_{k_j+1}\|}=v\ne0, \qquad\sgn(x_{k_j}^+)\equiv\sgn(\tilde{x}^+).
	\end{equation}
	Therefore, as $\sgn(x_{k_j}^+)= \sgn(\tilde{x}^+)$ for all $j$, the definition of the semi-smooth Newton sequence $\{x_k\}$ in 
	\eqref{eq:mss} implies
	$$
	\left(\left(A^\top A-I\right)\diag(\sgn(\tilde{x}^+))+I\right)\frac{x_{k_j+1}}{\|x_{k_j+1}\|}=\frac{A^\top z}{\|x_{k_j+1}\|},\qquad j=0,1,2,\ldots.
	$$
	By tending with $j$ to infinity in the above equality and by taking into account \eqref{eq:sssp}, it follows that 
	$$
	\left(\left(A^\top A-I\right)\diag(\sgn(\tilde{x}^+))+I\right)v=0, 
	$$
	which contradicts the first part of the Lemma \ref{nonsing} since $v\ne0$. Therefore, the sequence  $\{x_k\}$ is bounded, which proves the first part of the 
	proposition.   
	 
	For proving the second part of the lemma, let  $\bar x$ be an accumulation point of the sequence  $\{x_k\}$.  Then, since there are only finitely many vectors
	$\sgn(x_{k}^+)$ with coordinates  $0$ or $1$, there exists a vector $\hat{x}\in \R^m$ and a subsequence $\{x_{k_j}\}$ of $\{x_k\}$ such that 
	$$
	\lim_{j\to\infty}{x_{k_j+1}}=\bar x, \qquad   \sgn(x_{k_j}^+)\equiv  \sgn(\hat{x}^+),
	$$
	Since  $\sgn(x_{k_j}^+)=\sgn(\hat{x}^+)$  for all $j$, the definition of the semi-smooth Newton sequence $\{x_k\}$ in \eqref{eq:mss} implies	
	$$ 
	\left(\left(A^\top A-I\right) \diag(\sgn(\hat{x}^+))+I\right)x_{k_j+1}=A^\top z,\qquad j=0,1,2,\ldots.
	$$
	Taking the limit in the last equality as $k_j$ goes to $\infty$ ,  the second part of the proposition follows.
\end{proof}

The next lemma gives a condition for the convergence of the semi-smooth Newton sequence~$\{x_k\}$.
\begin{lemma}\label{ml}
	Let $a\in(0,1/2)$. If for any diagonal matrix $G$, with  diagonal elements $0$ or $1$  
	\begin{equation} \label{eq:cc}
	\left\|\left(\left(A^\top A-I\right)G+I\right)^{-1}\left(A^\top A-I\right)\right\|<a,
	\end{equation}
	 then the semi-smooth Newton sequence $\{x_k\}$ converges linearly to the unique solution $u$ of the equation   \eqref{equation} from any starting point and 
	 $P_K(z)=Au^+$.
\end{lemma}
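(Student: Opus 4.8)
The plan is to recast the iteration \eqref{eq:mss} as an error recursion for the unique solution $u$ of \eqref{equation} (whose existence, uniqueness, and the identity $P_K(z)=Au^+$ are already supplied by Lemma~\ref{pm2}) and to show that this recursion contracts. To lighten notation write $B:=A^\top A$ and, for $x\in\R^m$, set $D(x):=\diag(\sgn(x^+))$, a diagonal matrix with entries $0$ or $1$. By Lemma~\ref{nonsing} every matrix $(B-I)D(x_k)+I$ is invertible, so \eqref{eq:mss} is the closed form $x_{k+1}=((B-I)D(x_k)+I)^{-1}A^\top z$. Since $D(u)u=u^+$, the solution $u$ of \eqref{equation} also satisfies $((B-I)D(u)+I)u=A^\top z$; thus $A^\top z$ is the common right-hand side for every admissible diagonal matrix, which is what lets $x_k$ and $u$ be compared.

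First I would derive the recursion. Splitting $u^+=D(x_k)u+(D(u)-D(x_k))u$ inside $A^\top z=(B-I)u^++u$ and subtracting from \eqref{eq:mss} gives
\begin{equation*}
\left((B-I)D(x_k)+I\right)(x_{k+1}-u)=(B-I)(D(u)-D(x_k))u,
\end{equation*}
hence
\begin{equation*}
x_{k+1}-u=\left(\left((B-I)D(x_k)+I\right)^{-1}(B-I)\right)(D(u)-D(x_k))u.
\end{equation*}
Because $D(x_k)$ is a diagonal $0/1$ matrix, hypothesis \eqref{eq:cc} applied with $G=D(x_k)$ bounds the norm of the bracketed factor by $a$, so that $\|x_{k+1}-u\|\le a\,\|(D(u)-D(x_k))u\|$.

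The decisive step is then to estimate $\|(D(u)-D(x_k))u\|$ in terms of $\|x_k-u\|$. Using $D(x_k)x_k=x_k^+$ I would rewrite $(D(u)-D(x_k))u=(u^+-x_k^+)-D(x_k)(u-x_k)$; the nonexpansiveness of the positive-part map together with $\|D(x_k)\|\le1$ then yields $\|(D(u)-D(x_k))u\|\le 2\|x_k-u\|$. Combining this with the previous inequality gives $\|x_{k+1}-u\|\le 2a\,\|x_k-u\|$, and since $a\in(0,1/2)$ the factor $2a<1$ forces $\{x_k\}$ to converge linearly to $u$ from any starting point; the equality $P_K(z)=Au^+$ is then immediate from Lemma~\ref{pm2}.

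The one point requiring care is exactly this norm estimate, and I expect it to be the main obstacle. A sharper, coordinatewise inspection improves it: the $i$-th entry of $(D(u)-D(x_k))u$ is nonzero only at coordinates where $u^i$ and $x_k^i$ have opposite sign behaviour relative to $0$, and there the entry equals $\pm u^i$ with $|u^i|\le|x_k^i-u^i|$ because $x_k^i$ and $u^i$ straddle the origin; this gives the stronger bound $\|(D(u)-D(x_k))u\|\le\|x_k-u\|$ and hence contraction already for $a<1$. Either way the conclusion follows, but the crude triangle-inequality estimate is exactly what makes the stated threshold $a<1/2$ the natural hypothesis, and everything else—invertibility of the Newton matrices, the closed form of the iteration, and the identification $u^+=D(u)u$—is routine given Lemma~\ref{nonsing} and Lemma~\ref{pm2}.
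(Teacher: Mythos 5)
Your proof is correct and follows essentially the same route as the paper: subtracting the defining equation of $u$ from \eqref{eq:mss} yields the identical error recursion (your right-hand side $(B-I)\left(D(u)-D(x_k)\right)u = (B-I)\left(u^+-D(x_k)u\right)$ is exactly the paper's $-(B-I)\left(x_k^+-u^++D^k(u-x_k)\right)$), and you then bound it by $2a\|x_k-u\|$ using nonexpansiveness of $x\mapsto x^+$ and $\|D(x_k)\|\le 1$, precisely as the paper does, concluding with $2a<1$. Your side remark that a coordinatewise inspection gives the sharper estimate $\left\|\left(D(u)-D(x_k)\right)u\right\|\le\|x_k-u\|$ (so that contraction would hold for any $a<1$) is also correct and in fact improves on the paper's factor of $2$, though it is not needed for the lemma as stated.
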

\begin{proof}
	Let $u$ be the solution of equation \eqref{equation} and $\{x_k\}$ be the sequence defined in \eqref{eq:mss}. To simplify the notations, 
	denote 
	$$
	B=A^\top A, \qquad   D^k=\diag\left(\sgn(x_k^+)\right).
	$$
	Since $u$  is  the solution of equation \eqref{equation}, using the above notations and the definition of $\{x_k\}$ in \eqref{eq:mss}, we have
	 $$
	 (B-I)u^++u=A^\top z, \qquad \left(\left(B-I\right)D^k+I\right)x_{k+1}=A^\top z.
	 $$
	 By subtracting the two equalities above, and by taking into account that $D^k x_k=x_k^+$, we obtain, after some straightforward manipulations, that 
          $$
          (B-I)(x_k^+-u^+)+(B-I)D^k(x_{k+1}-x_k)+x_{k+1}-u=0,
          $$
	or equivalently, 
	$$
	\left((B-I)D^k+I\right)(x_{k+1}-u)+(B-I)\left(x_k^+-u^++D^k(u-x_k)\right)=0,
	$$
	Since $D^k=\diag\left(\sgn(x_k^+)\right)$, by using the last equality and Lemma~\ref{nonsing}, we get
	$$
	x_{k+1}-u=-\left((B-I)D^k+I\right)^{-1}(B-I)\left(D^k(x_k-u)+(x_k^+-u^+)\right).
	$$
	By using the first inequality in \eqref{eq:np}, the Cauchy inequality,  $\|D^k\|\leq 1$, and the nonexpansivity of the projection mapping 
	$x\mapsto x^+$ onto the nonnegative orthant (see Remark~\ref{re:ppo}), the latter relation and assumption \eqref{eq:cc} gives
	$$
	\|x_{k+1}-u\|\le \left\|\left((B-I)D^k+I\right)^{-1}(B-I)\right\|2\|x_k-u\|<2a\|x_k-u\|,
	$$
	where $2a<1$. As the last inequality holds for all $k$,  we conclude that the sequence $\{x_k\}$ converges linearly to $u$ and Lemma~\ref{pm2} implies
	$P_K(z)=Au^+$.
\end{proof}

The next theorem provides a sufficient condition for the linear convergence of the Newton 
iteration. 
\begin{theorem}
Let $b\in(0,1/3)$. If %the following inequality holds
\begin{equation} \label{eq:bl13}
 \|A^\top A-I\|<b,
\end{equation}
then the semi-smooth Newton sequence $\{x_k\}$  converges linearly to the unique solution $u$ of equation \eqref{equation} from any starting point  and $P_K(z)=Au^+$.
\end{theorem}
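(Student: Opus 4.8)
The plan is to show that hypothesis \eqref{eq:bl13} forces the bound \eqref{eq:cc} of Lemma~\ref{ml} to hold with a suitable constant $a\in(0,1/2)$, so that the theorem follows immediately from that lemma. Writing $B=A^\top A$ and $E=B-I$, the assumption reads $\|E\|<b$, and the task is to estimate $\left\|\left((B-I)G+I\right)^{-1}(B-I)\right\|=\left\|(EG+I)^{-1}E\right\|$ uniformly over all diagonal matrices $G$ whose diagonal entries are $0$ or $1$.

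First I would observe that any such $G$ satisfies $\|G\|\le1$, whence by the submultiplicativity in \eqref{eq:np} we have $\|EG\|\le\|E\|\,\|G\|\le\|E\|<b<1$. To control the inverse I would cast the matrix into the shape required by Banach's Lemma (Lemma~\ref{lem:ban}): setting $E':=-EG$ gives $EG+I=-(E'-I)$, hence $\left\|(EG+I)^{-1}\right\|=\left\|(E'-I)^{-1}\right\|$; since $\|E'\|=\|EG\|<1$, Lemma~\ref{lem:ban} yields invertibility (already guaranteed by Lemma~\ref{nonsing}) together with the bound $\left\|(EG+I)^{-1}\right\|\le 1/(1-\|EG\|)\le 1/(1-\|E\|)$.

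Combining these estimates through \eqref{eq:np} gives
\[
\left\|(EG+I)^{-1}E\right\|\le\left\|(EG+I)^{-1}\right\|\,\|E\|\le\frac{\|E\|}{1-\|E\|}<\frac{b}{1-b},
\]
where in the last step I use that $t\mapsto t/(1-t)$ is increasing on $(0,1)$ together with $\|E\|<b$. The decisive point is then purely arithmetic: the condition $b<1/3$ is exactly what makes $b/(1-b)<1/2$, since $b/(1-b)<1/2\iff 3b<1$. Thus I may take $a:=b/(1-b)\in(0,1/2)$, and the strict inequality above shows that \eqref{eq:cc} holds with this $a$ for every admissible $G$. Lemma~\ref{ml} then delivers the claimed linear convergence of $\{x_k\}$ to the unique solution $u$ of \eqref{equation}, together with the identity $P_K(z)=Au^+$.

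The only real subtlety I anticipate is the bookkeeping in the application of Banach's Lemma, namely matching the $EG+I$ appearing here to the $E-I$ form of Lemma~\ref{lem:ban} via the sign change, and feeding the bound on $\|EG\|$ (rather than on $\|E\|$ directly) into the lemma. Everything else is routine norm manipulation, and the threshold $1/3$ is recovered transparently from the inequality $b/(1-b)<1/2$.
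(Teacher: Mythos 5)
Your proposal is correct and follows essentially the same route as the paper: reduce to Lemma~\ref{ml} with $a=b/(1-b)$, bound $\|(EG+I)^{-1}\|$ via Banach's Lemma applied to $(I-B)G$ (your $E'$ is exactly the paper's choice of $E$), and conclude with the arithmetic fact that $b<1/3$ is equivalent to $b/(1-b)<1/2$. The only difference is cosmetic bookkeeping in how the inequality chain is arranged.
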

\begin{proof}
	To simplify the notations,  denote $B=A^\top A$.
	By Lemma $\ref{ml}$ it is enough to show that \[\|((B-I)G+I)^{-1}(B-I)\|<\f{b}{1-b}<\f12,\] 
	for any diagonal matrix $G$ with diagonal elements $0$ or $1$. By using the properties \eqref{eq:np} of the norm, it is enough to show
	that 
	\begin{equation}\label{show}
		\|(I-(I-B)G)^{-1}\|\|(I-B)\|<\f{b}{1-b}.%<\f12.
	\end{equation} 
	Since $\|G\|\leq 1$, assumption \eqref{eq:bl13} and the properties \eqref{eq:np} of the norm imply 
	$$
	\|(I-B)G\|\leq\|I-B\|\|G\|\le\|I-B\|<b<1.
	$$
	Thus, by applying Lemma~\ref{lem:ban} with $E=(I-B)G$ and by using the last inequality, we conclude that
	$$
		\|(I-(I-B)G)^{-1}\|\|I-B\|\leq \ds\f{\|I-B\|}{1-\|(I-B)G\|}<\frac{b}{1-b},
	$$
	which is exactly the required relation \eqref{show}. By using Lemma~\ref{pm2}, we have $P_K(z)=Au^+$.
\end{proof}

\section{Conclusions}
In this paper we studied  the problem of   projection onto a  simplicial cone  which,  via  Moreau's decomposition theorem for projecting onto cones,  is reduced to 
finding the unique solution of a  nonsmooth system of equations.  Our main result shows that,   under a mild assumption on the simplicial cone,   we can  apply  a
semi-smooth Newton method for finding a unique solution of  the  obtained  associated system and  that  the generated sequence converges linearly to the  solution   for 
any starting point.   It would be interesting to see whether the used technique can be applied  for finding  the projection onto  more general cones.   As has been shown 
in \cite{ujvari2007projection}, the problem of projection onto a   simplicial  cone is reduced to a certain type of linear complementarity problem (LCP).  Then, another 
interesting problem to address is to compare the four  methods  for projecting onto a  simplicial  cone, namely,  semi-smooth Newton method,  the  methods proposed in 
\cite{ujvari2007projection,Morillas2005,EkartNemethNemeth2009} and  the Lemke's method for LCPs.

\bibliographystyle{habbrv}
\bibliography{simcoproj}

\end{document}